\theoremstyle{definition}
\newtheorem{defn}{Definition}
\newtheorem{thm}{Theorem}
\newtheorem{cor}{Corollary}
\title{Equal Splits of Vertex-Weighted Trees}
\author{Corinne Mulvey
\\Mentored by Diana Davis}
\date{October 2020}
\begin{document}

\maketitle

\begin{abstract}
  Given a tree of weighted vertices, it is sometimes possible to break the tree into two equally-weighted subtrees within an allowable error. We give a fast algorithm that finds an edge which breaks the tree into equal-weight components or determines there is no such edge.   
\end{abstract}

\section{Introduction}
In recent years, the issue of political gerrymandering has attracted the interest of mathematicians. The leading algorithm for processing districting plans, GerryChain \cite{gerrychain}, uses randomly generated spanning trees of districting plans to construct millions of possible districting plans. These plans are then implemented using existing electoral data to determine if the currently enacted plan is an outlier.

To create new districts, the algorithm merges two existing districts, generates a spanning tree over the combined district, then searches for an edge of this tree that produces two subtrees of approximately equal population when removed. To find this edge, the algorithm randomly selects edges and tests if they yield equal population subtrees. After too many failures, the algorithm generates a new spanning tree and repeats the process \cite{deford2019recombination}. Although the algorithm terminates after too many failures, this does not necessarily mean there is not an edge that splits the tree into equal-weight components. Thus, making this process more efficient is of great interest. 

We give a fast algorithm to replace this process by efficiently determining if a given weighted tree contains a edge that breaks it into equally weighted subtrees within an allowable error. Furthermore, it can quickly determine if there is no such edge. The weight of each vertex corresponds to the population of the voting precinct the vertex represents. 
\subsection{Notation}
\begin{itemize}
    \item The edge set and vertex set of a tree $T$ is denoted as $E(T)$ and $V(T)$, respectively. 
    \item The degree of a vertex $v$, $deg(v)$, denotes the number of edges incident with $v$.  
    \item The forest $T-\{v\}$ is the set of disconnected subtrees formed by removing the vertex $v$ and all the edges incident with $v$ from a tree $T$.
    \item For a vertex $v$ in a weighted tree $T$, let $w(v)$ denote the weight of the vertex and $w(T)$ denote the total vertex weight of the tree. 
    
\end{itemize}

\section{Splitting Vertex-Weighted Trees}
As mentioned in the introduction, given a tree of weighted vertices, it is useful to know if there is a single edge that can be removed to produce two trees with approximately equal total vertex weights. This motivates the following definitions of a 2-splittable tree and
cut edge. 

\begin{defn}[2-splittable for weighted trees]
Let $T$ be a vertex weighted tree with total vertex weight $w(T)= S$. An edge $e$ is a \textbf{cut edge} of $T$ if the components $T_1$ and $T_2$ of the forest $T-\{e\}$ are such that,
$$\frac{S}{2}-\epsilon \leq w(T_1), w(T_2) \leq \frac{S}{2}+\epsilon,$$ for $\epsilon\geq0$.
If a tree $T$ contains a cut edge, then $T$ is \textbf{2-splittable within an error of $\epsilon$}.
\end{defn}
\begin{thm}
Let $T$ be a tree with total vertex weight $S$. Suppose that there exists a vertex $v$ in $T$ such that every component of the forest $T-\{v\}$ has total vertex weight less than $\frac{S}{2}-\epsilon$. Then $T$ is \emph{not} 2-splittable. 
\label{thm:beans}
\end{thm}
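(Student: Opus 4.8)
The plan is to show directly that no edge of $T$ can be a cut edge; by definition this means $T$ is not 2-splittable. The engine of the argument is a single structural fact about trees: deleting any edge $e$ leaves exactly two components, and the distinguished vertex $v$ lies in exactly one of them. Thus every edge has a well-defined component ``on the far side'' from $v$, and it suffices to bound the weight of that component.

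First I would fix an arbitrary edge $e=\{a,b\}$ of $T$ and write $T-\{e\}=T_1\cup T_2$ with $v\in T_1$. The heart of the proof is the claim that $T_2$ is contained in a single component of the forest $T-\{v\}$. To establish this I would argue along tree paths: every vertex $u\in T_2$ is joined to the endpoint of $e$ lying in $T_2$ by a path contained entirely in $T_2$, and since $v\notin T_2$ this path avoids $v$. Hence $u$ and that endpoint lie in the same component $C$ of $T-\{v\}$; as $u$ was arbitrary, $T_2\subseteq C$.

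With the containment in hand the conclusion is immediate. Monotonicity of weight over subsets gives $w(T_2)\le w(C)$, and the hypothesis that every component of $T-\{v\}$ has weight less than $\frac{S}{2}-\epsilon$ yields $w(T_2)<\frac{S}{2}-\epsilon$. This contradicts the requirement $\frac{S}{2}-\epsilon\le w(T_2)$ for $e$ to be a cut edge. Since $e$ was arbitrary, $T$ has no cut edge and is therefore not 2-splittable.

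I expect the only genuine obstacle to be making the containment claim airtight: precisely, the fact that two vertices lie in the same component of $T-\{v\}$ if and only if the unique path between them in $T$ avoids $v$. This is standard for trees but deserves a clean statement. A pleasant feature of the path argument is that it handles uniformly the degenerate case in which $e$ is incident to $v$ — there $T_2$ is an entire component of $T-\{v\}$ rather than a proper subset — so no separate case analysis is required.
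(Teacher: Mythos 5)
Your proof is correct, and it rests on the same underlying fact as the paper's: the component of $T-\{e\}$ on the far side of $e$ from $v$ is contained in a single component of $T-\{v\}$, so its weight is bounded above by a quantity the hypothesis forces below $\frac{S}{2}-\epsilon$. The difference is organizational but real. The paper splits into two cases according to whether $e$ is incident with $v$ (where the far component \emph{equals} a component of $T-\{v\}$) or not (where it is a proper subgraph of one), and argues by contradiction from an assumed cut edge; you instead prove directly that every edge fails the cut-edge condition, and your path argument --- two vertices of $T_2$ are joined by a path inside $T_2$, which avoids $v$, hence they lie in one component of $T-\{v\}$ --- absorbs both cases at once. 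What your version buys is exactly what you claim: no case analysis, and a single clean inequality $w(T_2)\le w(C)<\frac{S}{2}-\epsilon$ covering the boundary case of equality. The one step you flag as needing care (vertices lie in the same component of $T-\{v\}$ iff the path between them avoids $v$) is indeed the crux, but your argument only needs the easy direction --- a path avoiding $v$ survives in $T-\{v\}$ --- which holds in any graph, so there is no gap.
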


\begin{proof}
We prove the contrapositive. Suppose such a vertex $v$ exists and $T$ is 2-splittable. Since there is at least one cut edge $e$, the forest $T-\{e\}$ consists of components $K_1$ and $K_2$ with total vertex weights such that $\frac{S}{2}-\epsilon\leq w(K_1),w(K_2)\leq \frac{S}{2}+\epsilon$. 

Consider the two possible locations of $e$. Either $e$ is \emph{not} in the graph $T-\{v\}$ or $e$ \emph{is} in $T-\{v\}$. 

\emph{Case I.} (See Figure \ref{fig:pfa}.) If $e\not\in E(T-\{v\})$, then $e$ is incident with $v$ and an adjacent vertex $v'$ in a component $T_i$ of $T-\{v\}$. In the graph $T-\{e\}$, $v$ and $v'$ are in distinct components $K_1$ and $K_2$. Without loss of generality, let  $v'$ be in the component $K_1$. Then, $K_1$ is the set of all vertices and edges connected to $v'$ in $T-\{v\}$. Therefore, $T_i\cong K_1$, and 

$$w(T_i)=w(K_1)\geq \frac{S}{2}-\epsilon.$$

 When the cut edge is adjacent to $v$, the component incident with a vertex incident with $e$ must have total vertex weight within the desired range $\frac{S}{2}\pm\epsilon$. Thus, that component has total weight of at least $\frac{S}{2}-\epsilon$.

\emph{Case II.} (See Figure \ref{fig:pfb}.) If $e\not\in T-\{v\}$, then $e$ is in some component $T_i$ of $T-\{v\}$. Without loss of generality, assume that $v$ is in the component $K_2$ of the graph $T-\{e\}$.  Removing the vertex $v$  causes $K_2$ to be disconnected in the graph $T-\{v\}$ with some of its edges and vertices in the component $T_i$. The other component of $T-\{e\}$, $K_1$, remains connected in $T_i$ because $e$ is in the connected component $T_i$ of $T-\{v\}$.  Therefore, $T_i$  consists of the entire vertex set of $K_1$ as well as a subset of vertices of $K_2$, and we have,
$$w(T_i)>w(K_1)\geq \frac{S}{2}-\epsilon.$$

Thus, the total vertex weight of a component $T_i$ of $T-\{v\}$ is greater than $\frac{S}{2}-\epsilon$. 
\end{proof}
\begin{figure}[h!]
    \centering
    \begin{subfigure}[h]{.4\textwidth}
    \centering
    \begin{tikzpicture}
        \node[circle, draw=black!50](v) at (-.2,0){$v$};
        
        \node[circle, draw=black, inner sep=.7mm ](vi) at (1.2,0){$v'$}; 
        
        \node[circle, draw=black!50](v1) at (-1.2,0){};

        \node[circle, draw=black!30](v1a) at (-2,.5){};
        
        \node[circle, draw=black!30](v1b) at (-2,-.5){};
        
        \node[circle, draw=black](vi1) at (2.2,0){};
        
        \node[circle, draw=black](vi1a) at (3,.5){}; 
        \node[circle, draw=black](vi1b) at (3,-.5){}; 
        \node[circle, draw=black](vi2) at (2,.5){}; 
        \node[](vi2a) at(2.7,1.2){}; 
        \node[](e) at (.6,.2){\textcolor{red}{$e$}}; \node[circle,draw= black!50](v2) at(-.5,1){}; 
        \node[](v2a) at(-1.5,1.4){};
        \node[](v2b) at(-.7,1.9){};
        \node[] (T_i)at (1.7,-.4){$T_i$};
        \node[](v3) at(-.6,-.8){}; 
        \node[](v4) at(-1.1,.7){};
        \node[](k2) at (-.2,1.4){$ K_2$};
        \node[](k1) at (1,1.4){$K_1$};

        \draw[-,dashed, red](v)--(vi); 
        \draw[-](vi) to (vi1); 
        \draw[-](vi) to (vi2);
        \draw[-](vi1) to (vi1a);
        \draw[-](vi1) to (vi1b);
        \draw[-, black!30](v1) to (v1a); 
        \draw[-, black!30](v1) to (v1b); 
        \draw[-, black!30](v2) to (v2a);
        \draw[-, black!30](v2) to (v2b); 
        \draw[dashed,black!30](v) to (v2); 
        \draw[dashed,black!30](v) to (v1); 
        \draw[dashed,black!30](v) to (v3); 
        \draw[dashed,black!30](v) to (v4); 
        \draw[thick,dotted, black!80] (.4,1.8) to (.4,-.7); 
    \end{tikzpicture}
    \caption{Case I: $e$ is incident with $v$.}
    \label{fig:pfa}
    \end{subfigure}
    \hfill
    \begin{subfigure}[h!]{.45\textwidth}
    \centering 
    \begin{tikzpicture}
        \node[circle, draw=black!50](v)at (0,0){$v$};
        \node[circle, draw=black](vi)at (1,0){}; 
        \node[circle, draw=black](vi1)at (2,0){};
        \node[circle, draw=black](vi2)at (3,0){};
        \node[circle, draw=black](via)at (.7,.7){};
        \node[circle, draw=black](vib)at (1.2,.7){};
        
        \node[circle, draw=black](vi1a)at (2.4,.7){};
        \node[circle, draw=black](vi1b)at (2.4,-.7){};
        \node[circle, draw=black](vi1ab)at (3.4,.7){};
        \node[](e) at (1.7,.2){\textcolor{red}{$e$}}; 
        \node[](ti) at(1,-.5){$T_i$}; 
        
        \node[](k1)at(1.9,1.4){$K_1$}; 
        \node[](k2)at (1.1,1.4){$K_2$}; 
        
        \draw[dashed, black!50](v) to (vi); 
        \draw[dashed,black!50](v) to (-1,0);
        \draw[dashed,black!50](v) to (-.6,1);
        \draw[dashed,black!50](v) to (-.5,-.8);
        \draw[-,red] (vi) to (vi1);
        \draw[-](vi) to (via); 
        \draw[-](vi) to (vib);
        
        \draw[-](vi1) to (vi1a);
        \draw[-](vi1) to (vi1b);
        \draw[-](vi1a) to (vi1ab);
        \draw[-](vi1) to (vi2);
        \draw[thick,black!80, dotted](1.5,1.6) to (1.5, -1); 
        
    \end{tikzpicture}
    \caption{Case II: $e$ isn't incident with $v$.}
    \label{fig:pfb}
    \end{subfigure}
    
    \caption{The possible locations for a cut edge in a tree described in theorem \ref{thm:beans}.}
    \label{fig:pf}
\end{figure}
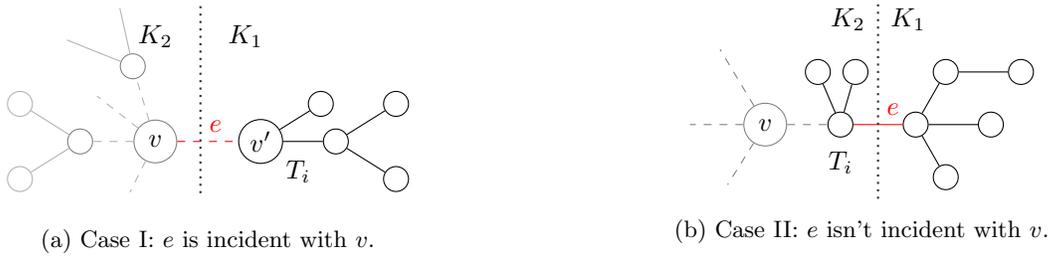
\begin{cor}
Let $T$ be a tree with total vertex weight $S$. Suppose that there exists a vertex $v$ in $T$ such that the forest $T-\{v\}$ has a component $T_i$ where $\frac{S}{2}-\epsilon\leq w(T_i) \leq \frac{S}{2}+\epsilon$. Then, $T$ is 2-splittable with a within an error of $\epsilon$ and the desired cut edge is the edge incident with $v$ and the vertex adjacent to $v$ in $T_i$.
\label{thm:beanscor}
\end{cor}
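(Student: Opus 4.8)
The plan is to exhibit the claimed edge and directly verify that both resulting components fall within the allowable range. Let $e$ denote the edge incident with $v$ and the vertex $v'$ adjacent to $v$ in $T_i$. First I would establish the structural fact that removing $e$ from $T$ splits it into exactly two components, one of which is precisely $T_i$. This follows from the tree structure: since $T$ is a tree, the subtree $T_i$ is joined to the rest of $T$ by the single edge $e$, so deleting $e$ separates the vertex set of $T_i$ from everything else. Writing $K_1 = T_i$ and letting $K_2$ be the other component of $T - \{e\}$, the component $K_2$ consists of $v$ together with all the remaining components of $T - \{v\}$.

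With the components identified, the weights are immediate: since $K_1$ and $K_2$ partition $V(T)$, we have $w(K_1) = w(T_i)$ and $w(K_2) = S - w(T_i)$. The hypothesis supplies $\frac{S}{2} - \epsilon \leq w(T_i) \leq \frac{S}{2} + \epsilon$, so $K_1$ already lies in range. It then remains to check $K_2$, which I would do by subtracting the hypothesis from $S$: the upper bound on $w(T_i)$ forces $w(K_2) \geq \frac{S}{2} - \epsilon$, and the lower bound on $w(T_i)$ forces $w(K_2) \leq \frac{S}{2} + \epsilon$.

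Having confirmed that both $w(K_1)$ and $w(K_2)$ lie in $[\frac{S}{2} - \epsilon, \frac{S}{2} + \epsilon]$, the edge $e$ meets the definition of a cut edge, and therefore $T$ is 2-splittable within an error of $\epsilon$, as claimed. I expect no serious obstacle here; this statement is essentially the constructive counterpart to the Case I reasoning in Theorem \ref{thm:beans}, and the only point requiring genuine care is the structural claim that deleting $e$ recovers $T_i$ exactly as one of the two components. The weight bound for $K_2$ is then a one-line symmetric consequence of the two-sided bound on $w(T_i)$.
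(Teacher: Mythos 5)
Your proof is correct and matches the paper's approach: the paper states this result with no written proof, treating it as immediate from Theorem \ref{thm:beans}, and your structural identification of $T_i$ as one of the two components of $T-\{e\}$ is exactly the Case I observation ($T_i\cong K_1$) already made in that theorem's proof. The only content you add is the explicit symmetric check that $w(K_2)=S-w(T_i)$ also lies in $[\frac{S}{2}-\epsilon,\frac{S}{2}+\epsilon]$, a step the paper leaves unstated but which is needed and which you carry out correctly.
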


In figure \ref{fig:ex1}, removing vertex $v$ from the tree yields components with total vertex weights less than the desired $\frac{S}{2}-\epsilon$. Thus, we immediately know $T$ is not 2-splittable.

\begin{figure}[h!]
    \centering
    \begin{tikzpicture}
    
    \node[](tree) at (0,1.5){$T:$}; 
    
    \node[circle,draw=black, inner sep = .5mm] (a) at (0,0){
    \footnotesize{0.2}};
    
    \node[circle,draw=black, inner sep = .5mm] (b) at (1,0){
    \footnotesize{0.1}};
    
    \node[circle,draw=red, inner sep = .5mm] (c) at (2,0){
    \footnotesize{0.6}};
    \node[] (lab) at (1.7,.3){\textcolor{red}{$v$}}; 
    
    \node[circle,draw=black, inner sep = .5mm] (d) at (3,0){
    \footnotesize{0.3}};
    
    \node[circle,draw=black, inner sep = .5mm] (d1) at (4,0){
    \footnotesize{0.7}};
    
    \node[circle,draw=black, inner sep = .5mm] (d2) at (3.5,1){
    \footnotesize{0.4}};
    
    \node[circle,draw=black, inner sep = .5mm] (d3) at (3.5,-1){
    \footnotesize{0.2}};
    
    \node[circle,draw=black, inner sep = .5mm] (c_A) at (2,1){
    \footnotesize{0.1}};
    
    \node[circle,draw=black, inner sep = .5mm] (c_A1) at (1.5,2){
    \footnotesize{0.3}};
    
    \node[circle,draw=black, inner sep = .5mm] (c_A2) at (2.5,2){
    \footnotesize{0.5}};
    
    \node[circle,draw=black, inner sep = .5mm] (c1) at (1.5,-1){
    \footnotesize{0.6}};
    
    \node[circle,draw=black, inner sep = .5mm] (c2) at (2.5,-1){
    \footnotesize{0.4}};
    
    \node[circle,draw=black, inner sep = .5mm] (b1) at (1,1){
    \footnotesize{0.3}};
    
    \draw[-](a) to (b); 
    \draw[-](b) to (c); 
    \draw[-](c) to (d); 
    \draw[-](b) to (b1); 
    \draw[-](c) to (c_A);
    \draw[-](c) to (c1); 
    \draw[-](c) to (c2); 
    \draw[-](c_A) to (c_A1); 
    \draw[-](c_A) to (c_A2); 
    \draw[-](d) to (d1); 
    \draw[-](d) to (d2); 
    \draw[-](d) to (d3); 
    \end{tikzpicture}
    \hspace{1.5cm}
    \begin{tikzpicture}
    \node[](tree) at (-.5,1.5){$T-\{v\}:$};
    
     \node[circle,draw=black, inner sep = .5mm] (a) at (0,0){
    \footnotesize{0.2}};
    
    \node[circle,draw=black, inner sep = .5mm] (b) at (1,0){
    \footnotesize{0.1}};
    
    \node[circle,draw=black, inner sep = .5mm] (d) at (3,0){
    \footnotesize{0.3}};
    
    \node[circle,draw=black, inner sep = .5mm] (d1) at (4,0){
    \footnotesize{0.7}};
    
    \node[circle,draw=black, inner sep = .5mm] (d2) at (3.5,1){
    \footnotesize{0.4}};
    
    \node[circle,draw=black, inner sep = .5mm] (d3) at (3.5,-1){
    \footnotesize{0.2}};
    
    \node[circle,draw=black, inner sep = .5mm] (c_A) at (2,1){
    \footnotesize{0.1}};
    
    \node[circle,draw=black, inner sep = .5mm] (c_A1) at (1.5,2){
    \footnotesize{0.3}};
    
    \node[circle,draw=black, inner sep = .5mm] (c_A2) at (2.5,2){
    \footnotesize{0.5}};
    
    \node[circle,draw=black, inner sep = .5mm] (c1) at (1.5,-1){
    \footnotesize{0.6}};
    
    \node[circle,draw=black, inner sep = .5mm] (c2) at (2.5,-1){
    \footnotesize{0.4}};
    
    \node[circle,draw=black, inner sep = .5mm] (b1) at (1,1){
    \footnotesize{0.3}};
    
    \draw[-](a) to (b); 
    \draw[-](b) to (b1); 
    \draw[-](c_A) to (c_A1); 
    \draw[-](c_A) to (c_A2); 
    \draw[-](d) to (d1); 
    \draw[-](d) to (d2); 
    \draw[-](d) to (d3);
    \end{tikzpicture}
    \caption{The total vertex weight of $T$ is $S=4.7$. $T$ is not 2-splittable with an error of $\epsilon=.05$ because the total vertex weight of each component of $T-\{v\}$ is less than $2.35-.05=2.3$. This is easily verified by checking each edge of $T$.}
    \label{fig:ex1}
\end{figure}
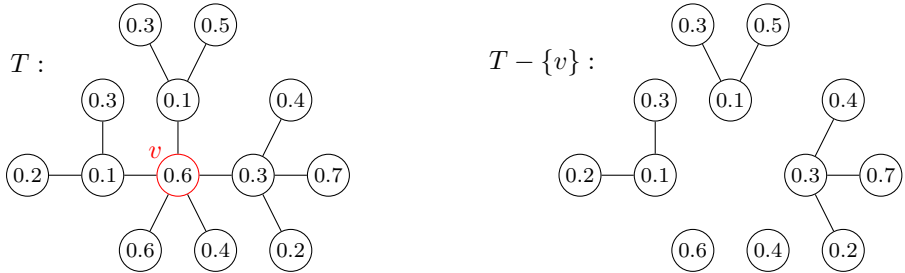
 
Although removing $v$ instantly showed $T$ to not be 2-splittable, it is important to note that if  the vertex adjacent to $v$ with weight 0.3 was removed, there would have been a component with total weight greater than $\frac{S}{2}+\epsilon$. This does not imply that $T$ is 2-splittable, as that would contradict our prior conclusion. If a tree $T$ is not 2-splittable, then every vertex $v_i$ of $T$ does \emph{not} necessarily yield components with every total weight less than $\frac{S}{2}-\epsilon$ when removed.  Thus, the converse of theorem \ref{thm:beans} is false. Figure \ref{fig:CE1} serves as an explicit counterexample to the converse. 
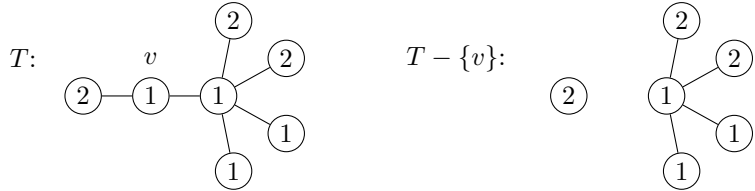
\begin{figure}[h!]
   \centering
    \begin{tikzpicture}
    \node[] (tree) at (.2,.5){$T$:};
    \node[circle,draw=black, inner sep = .7mm] (b) at (1,0){2};
    \node[circle,draw=black, inner sep = .7mm] (c) at (1.9,0){1};
    \node (k) at (1.9,.5){$v$};
    \node[circle,draw=black, inner sep = .7mm] (d) at (2.8,0){1};
    \node[circle,draw=black, inner sep = .7mm] (e) at (3,1){2};
    \node[circle,draw=black, inner sep = .7mm] (f) at (3.7,.5){2};
    \node[circle,draw=black, inner sep = .7mm] (g) at (3.7,-.5){1};
    \node[circle,draw=black, inner sep = .7mm] (h) at (3,-1){1};
    
    \draw[-](b) to (c); 
    \draw[-](c) to (d);
    \draw[-](d) to (e);
    \draw[-](d) to (f);
    \draw[-](d) to (g);
    \draw[-](d) to (h);
    \end{tikzpicture}
    \hspace{1cm}
    \begin{tikzpicture}
     \node[] (tree) at (0,.5){$T-\{v\}$:};
    \node[circle,draw=black, inner sep = .7mm] (b) at (1.5,0){2};
    \node[circle,draw=black, inner sep = .7mm] (d) at (2.8,0){1};
    \node[circle,draw=black, inner sep = .7mm] (e) at (3,1){2};
    \node[circle,draw=black, inner sep = .7mm] (f) at (3.7,.5){2};
    \node[circle,draw=black, inner sep = .7mm] (g) at (3.7,-.5){1};
    \node[circle,draw=black, inner sep = .7mm] (h) at (3,-1){1};

    \draw[-](d) to (e);
    \draw[-](d) to (f);
    \draw[-](d) to (g);
    \draw[-](d) to (h);
    \end{tikzpicture}
    \caption{ A counterexample to the converse of theorem \ref{thm:beans}. The tree $T$ is not 2-splitable with error $\epsilon=1$, but the graph $T-\{v\}$ contains a component with total weight $7>\frac{10}{2}-1=4$.}
    \label{fig:CE1}
    \end{figure}
    
In general, if a component in a forest $T-\{v\}$ has total vertex weight greater than $\frac{S}{2}+\epsilon$, then we must defer to the algorithm in section 2.1 to conclude if $T$ is 2-splittable. 

\subsection{An algorithm for finding a cut edge}

If we remove a vertex and there is component $T_{max}$ with total weight greater than  $\frac{S}{2}+\epsilon$, then we still need a way to find a cut edge or determine the tree is not 2-splittable. If the tree is 2-splittable, then any cut edge of $T$ must be in the component $T_{max}$. If the tree is not 2-splittable, then a vertex satisfying the conditions of theorem \ref{thm:beans} must also be in the component $T_{max}$. Therefore, it is only necessary to consider the component $T_{max}$. This argument leads to an algorithm for finding a cut edge of a given tree if it is 2-splittable.   

Let $T$ be a tree with total vertex weight $S$. We will find a cut edge for $T$ that makes the tree 2-splittable within a specified error $\epsilon$ or verify $T$ is not 2-splittable.
\begin{tcolorbox}
\textbf{Step 1.} Select a random vertex $v\in V(T)$. 

\textbf{Step 2.} Consider the forest $T-\{v\}$ with components $T_1, T_2,...,T_r$, where $r=deg(v)$. Calculate the total vertex weight for each component.
\begin{itemize}
\item Case 2a. If there exists a component $T_{max}$ with total vertex weight $W_1>\frac{S}{2}+\epsilon$, then proceed to step 3. 

\item Case 2b. If there exists a component $T_{max}$ with total weight $W_i$ where $W_i$ is within the desired range of $\frac{S}{2}\pm \epsilon$, then the desired cut vertex is the vertex incident with $v$ and some vertex in $V(T_{max})$ by corollary \ref{thm:beanscor}. We are done.  

\item Case 2c. If every component of $T-\{v\}$ is has total weight less than $\frac{S}{2}-\epsilon$, then by, theorem \ref{thm:beans}, $T$ is not 2-splittable and we are done. 
\end{itemize}

\textbf{Step 3.} Let the vertex $v_i\in V(T_{max})$ be adjacent to $v$ in $T$. Return to step 2 but substitute $T$ with $T_{max}$, $v$ with $v_i$, and consider the subgraph $T_{max}-\{v_i\}$ instead of $T-\{v\}$.
\end{tcolorbox}
\vspace{4mm}
If $T$ is 2-splittable, then a cut edge $e$ exists and therefore there is a vertex $v$ incident with $e$ that will produce a component with total weight within the desired range when removed. This vertex must be located in a component of the tree that has total weight greater than $\frac{S}{2}+\epsilon$ regardless of what vertex is initially removed. The algorithm always searches in this component, so it is guaranteed to find that vertex $v$. 

If $T$ is not 2-splittable and the initial vertex removed yields a component $T_{max}$ with total vertex weight greater than $\frac{S}{2}+\epsilon$, then  there is a vertex $v'$ in $T_{max}$ such that there is there is no component of $T_{max}-\{v'\}$ with total weight of at least $\frac{S}{2}-\epsilon$. Otherwise, $T$ would be 2-splittable. The algorithm is guaranteed to find this vertex since it looks in the components where such a vertex could exist. Thus, the algorithm will always terminate.

In figure \ref{fig:demo1}, the algorithm is demonstrated on the tree from figure \ref{fig:ex1} which was shown to be not 2-splittable. The algorithm takes two steps to show that $T$ is not 2-splittable, however, in figure \ref{fig:ex1} we chose a different vertex and it only took one step. Clearly, there are better and worse choices of vertices to start the algorithm from. We explore this in the next section.  


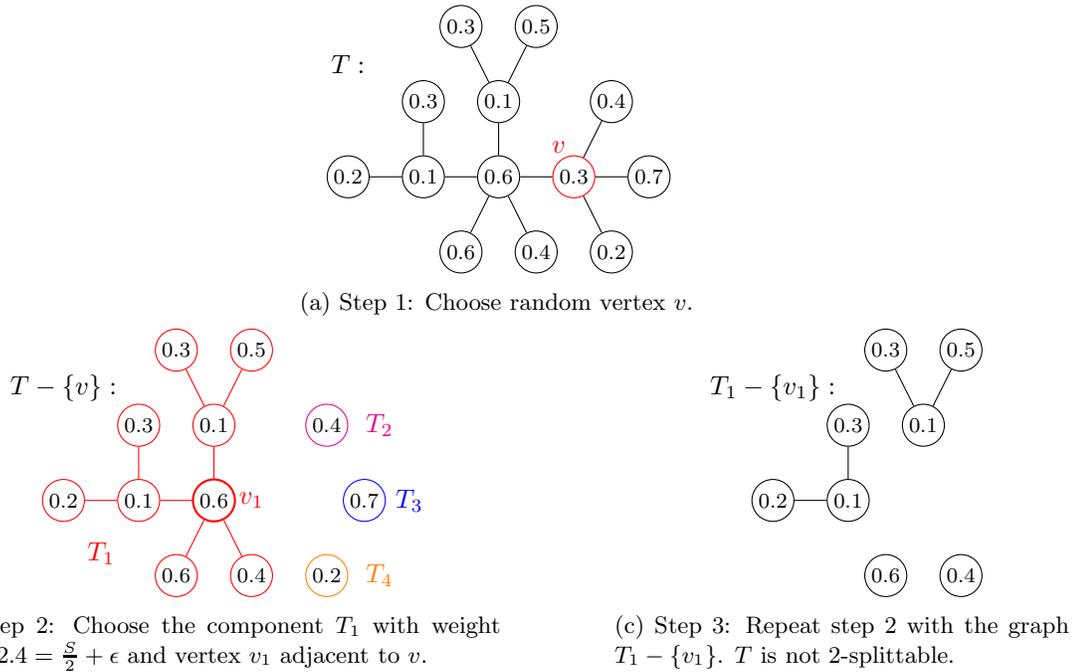
\begin{figure}[h!]
\centering
\begin{subfigure}[h]{.45\textwidth}
\centering

\begin{tikzpicture}
    \node[](tree) at (0,1.5){$T:$}; 
    \node[circle,draw=black, inner sep = .5mm] (a) at (0,0){
    \footnotesize{0.2}};
    
    \node[circle,draw=black, inner sep = .5mm] (b) at (1,0){
    \footnotesize{0.1}};
    
    \node[circle,draw=black, inner sep = .5mm] (c) at (2,0){
    \footnotesize{0.6}};
    
    \node[] at (2.80,.4){\textcolor{red}{$v$}};
    
    \node[circle,draw=red, inner sep = .5mm] (d) at (3,0){
    \footnotesize{0.3}};
    
    \node[circle,draw=black, inner sep = .5mm] (d1) at (4,0){
    \footnotesize{0.7}};
    
    \node[circle,draw=black, inner sep = .5mm] (d2) at (3.5,1){
    \footnotesize{0.4}};
    
    \node[circle,draw=black, inner sep = .5mm] (d3) at (3.5,-1){
    \footnotesize{0.2}};
    
    \node[circle,draw=black, inner sep = .5mm] (c_A) at (2,1){
    \footnotesize{0.1}};
    
    \node[circle,draw=black, inner sep = .5mm] (c_A1) at (1.5,2){
    \footnotesize{0.3}};
    
    \node[circle,draw=black, inner sep = .5mm] (c_A2) at (2.5,2){
    \footnotesize{0.5}};
    
    \node[circle,draw=black, inner sep = .5mm] (c1) at (1.5,-1){
    \footnotesize{0.6}};
    
    \node[circle,draw=black, inner sep = .5mm] (c2) at (2.5,-1){
    \footnotesize{0.4}};

    \node[circle,draw=black, inner sep = .5mm] (b1) at (1,1){
    \footnotesize{0.3}};
    
    \draw[-](a) to (b); 
    \draw[-](b) to (c); 
    \draw[-](c) to (d); 
    \draw[-](b) to (b1); 
    \draw[-](c) to (c_A);
    \draw[-](c) to (c1); 
    \draw[-](c) to (c2); 
    \draw[-](c_A) to (c_A1); 
    \draw[-](c_A) to (c_A2); 
    \draw[-](d) to (d1); 
    \draw[-](d) to (d2); 
    \draw[-](d) to (d3);
    \end{tikzpicture}
    \caption{Step 1: Choose random vertex $v$.}
    \end{subfigure}
    \vspace{1mm}
    
    \begin{subfigure}[h]{.5\textwidth}
    \centering
    \begin{tikzpicture}
    \node[](tree) at (0,1.5){$T-\{v\}:$}; 
    \node[circle,draw=red, inner sep = .5mm] (a) at (0,0){
    \footnotesize{0.2}};
    
    \node[circle,draw=red, inner sep = .5mm] (b) at (1,0){
    \footnotesize{0.1}};
    
    \node[circle,draw=red,thick, inner sep = .5mm] (c) at (2,0){
    \footnotesize{0.6}};
    
    \node[] at (2.5,0){\textcolor{red}{\textbf{$v_1$}}}; 
    
    \node[circle,draw=blue, inner sep = .5mm] (d1) at (4,0){
    \footnotesize{0.7}};
    \node[]at (4.6,0) {\textcolor{blue}{$T_3$}}; 
    
    \node[circle,draw=magenta, inner sep = .5mm] (d2) at (3.5,1){
    \footnotesize{0.4}};
    \node[]at (4.2,1) {\textcolor{magenta}{$T_2$}}; 
    
    \node[circle,draw=orange, inner sep = .5mm] (d3) at (3.5,-1){
    \footnotesize{0.2}};
    \node[] at (4.2,-1){\textcolor{orange}{$T_4$}}; 
    
    \node[circle,draw=red, inner sep = .5mm] (c_A) at (2,1){
    \footnotesize{0.1}};
    
    \node[circle,draw=red, inner sep = .5mm] (c_A1) at (1.5,2){
    \footnotesize{0.3}};
    
    \node[circle,draw=red, inner sep = .5mm] (c_A2) at (2.5,2){
    \footnotesize{0.5}};
    
    \node[circle,draw=red, inner sep = .5mm] (c1) at (1.5,-1){
    \footnotesize{0.6}};
    
    \node[circle,draw=red, inner sep = .5mm] (c2) at (2.5,-1){
    \footnotesize{0.4}};
    
    \node[circle,draw=red, inner sep = .5mm] (b1) at (1,1){
    \footnotesize{0.3}};
    \node[] at (.5,-.7){\textcolor{red}{$T_1$}};

    \draw[draw=red,-](a) to (b); 
    \draw[draw=red,-](b) to (c); 
    \draw[draw=red,-](b) to (b1); 
    \draw[draw=red,-](c) to (c_A);
    \draw[draw=red,-](c) to (c1); 
    \draw[draw=red,-](c) to (c2); 
    \draw[draw=red,-](c_A) to (c_A1); 
    \draw[draw=red,-](c_A) to (c_A2); 
    \end{tikzpicture}
    
    \caption{Step 2: Choose the component $T_1$ with weight $3.1>2.4 = \frac{S}{2}+\epsilon$ and vertex $v_1$ adjacent to $v$.}
    \end{subfigure}
    \hfill
    \begin{subfigure}[h]{.4\textwidth}
    \centering
    \begin{tikzpicture}
    \node[](tree) at (0,1.5){$T_1-\{v_1\}:$}; 
    \node[circle,draw=black, inner sep = .5mm] (a) at (0,0){
    \footnotesize{0.2}};
    
    \node[circle,draw=black, inner sep = .5mm] (b) at (1,0){
    \footnotesize{0.1}};
    
    \node[circle,draw=black, inner sep = .5mm] (c_A) at (2,1){
    \footnotesize{0.1}};
    
    \node[circle,draw=black, inner sep = .5mm] (c_A1) at (1.5,2){
    \footnotesize{0.3}};
    
    \node[circle,draw=black, inner sep = .5mm] (c_A2) at (2.5,2){
    \footnotesize{0.5}};
    
    \node[circle,draw=black, inner sep = .5mm] (c1) at (1.5,-1){
    \footnotesize{0.6}};
    
    \node[circle,draw=black, inner sep = .5mm] (c2) at (2.5,-1){
    \footnotesize{0.4}};
    
    \node[circle,draw=black, inner sep = .5mm] (b1) at (1,1){
    \footnotesize{0.3}};

    \draw[-](a) to (b); 
    \draw[-](b) to (b1); 
    \draw[-](c_A) to (c_A1); 
    \draw[-](c_A) to (c_A2);

    \end{tikzpicture}
    
    \caption{Step 3: Repeat step 2 with the graph $T_1-\{v_1\}$. $T$ is not 2-splittable.}
    \end{subfigure}
    
    \caption{Using the algorithm to show the tree from figure \ref{fig:ex1} with total weight 4.7 is not 2-splittable within an error $\epsilon=.05$.}
    \label{fig:demo1}
    \end{figure}
    
\subsection{An improved algorithm for finding a cut edge}

Selecting a vertex $v$ of a tree $T$ that maximizes the probability that no component of $T-\{v\}$ has total weight greater than $\frac{S}{2}+\epsilon$ would make the algorithm faster. In other words, a good vertex $v$ to start the algorithm from should minimize the average total weight of the components in $T-\{v\}$. 

Let $T$  be a tree with total vertex weight $S$. Choose vertex $v$ with weight $w(v)$ and degree $r$. Let $\overline{W}_v$ denote the average total vertex weight of the components of the forest $T-\{v\}$. We can express $\overline{W}_v$ as
$$\overline{W}_v=\frac{S-w(v)}{r}.$$

Starting the algorithm from a vertex $v$ that minimizes $\overline{W}_v$, will generally decrease the likelihood that there is a component of $T-\{v\}$ with a very large total vertex weight. This should then decrease the number of times the algorithm must iterate to find a cut edge. 

Since $\overline{W}_v$ is minimized when $v$ has high degree and large weight, a good vertex to start the algorithm from will have these properties. This fact motivates the following improved algorithm that starts from a vertex $v$ with a small $\overline{W}_v$ instead of a random one.

Let $T$ be a tree with total vertex weight $S$. We will find a cut edge for $T$ that makes the tree 2- splittable within a specified error $\epsilon$ or verify $T$ is not 2-splittable. 
\begin{tcolorbox}

\textbf{Step 1.} Let $\Delta(T)$ denote the maximum degree of any vertex in $T$ and $S=\{d_1,d_2,...,d_n\}$ be the set of vertices with degree $\Delta(T)$. Choose the vertex $v$ to be the vertex with maximum weight in $S$.

\textbf{Step 2.} Proceed with steps 2 and 3 stated in section 2.1.
\end{tcolorbox}
\vspace{4mm}

Using this improved algorithm on the tree in figure \ref{fig:ex1}, would start from the vertex $v$ shown in figure \ref{fig:ex1} instead of the random vertex chosen in figure \ref{fig:demo1}.

\section{Application to Unweighted Trees}

The algorithm can also be applied  to find two subtrees of equal order in an unweighted tree. It follows as a corollary of theorem \ref{thm:beans}.
\begin{cor}
Let $T$ be an unweighted tree of  even order $n$. Suppose that there exists a vertex in $T$ such that every component of the forest $T-\{v\}$ has order less than $\frac{n}{2}$. Then there is no edge in $T$ that breaks the tree into two components of equal order.
\end{cor}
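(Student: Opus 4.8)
The plan is to exhibit the unweighted setting as a special case of the vertex-weighted one and then invoke Theorem \ref{thm:beans} with $\epsilon = 0$. First I would place on $T$ the uniform weighting $w(u) = 1$ for every $u \in V(T)$. Under this weighting the total vertex weight of any subtree equals its number of vertices, so in particular $S = w(T) = n$, and for each vertex the weight of a component of $T-\{v\}$ coincides exactly with its order.

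Next I would translate the hypothesis. The assumption that every component of $T-\{v\}$ has order less than $\frac{n}{2}$ becomes the statement that every such component has total vertex weight less than $\frac{S}{2} = \frac{S}{2} - \epsilon$ with $\epsilon = 0$. This is precisely the hypothesis of Theorem \ref{thm:beans}, so the theorem applies directly and concludes that $T$ is not 2-splittable within an error of $0$.

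Finally I would unwind the definition of 2-splittability at $\epsilon = 0$. An edge $e$ is a cut edge with $\epsilon = 0$ exactly when both components of $T-\{e\}$ have weight equal to $\frac{S}{2} = \frac{n}{2}$; since every vertex has weight $1$, this says both components contain $\frac{n}{2}$ vertices, i.e.\ the two components have equal order. Hence the conclusion ``$T$ is not 2-splittable within an error of $0$'' is equivalent to ``no edge of $T$ breaks it into two components of equal order,'' which is the desired statement.

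The only point requiring care is the role of the parity hypothesis. The even order of $n$ guarantees that $\frac{n}{2}$ is an integer, so that an equal-order split is combinatorially possible and the correspondence between an $\epsilon = 0$ cut edge and an equal-order partition is exact; without it, no edge could produce equal orders in any case. I expect this bookkeeping---confirming that integrality of the weights makes ``weight $\frac{S}{2}$'' coincide with ``order $\frac{n}{2}$''---to be the only genuine step, since everything else is a direct restatement of Theorem \ref{thm:beans}.
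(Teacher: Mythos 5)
Your proposal is correct and follows exactly the paper's own route: assign weight $1$ to every vertex so that $S=n$ and component weights coincide with orders, then apply Theorem \ref{thm:beans} with $\epsilon=0$. The paper states this in three sentences; your version merely spells out the bookkeeping (the translation between weight and order, and the role of evenness) in more detail.
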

\begin{proof}
Begin by assigning a weight of 1 to every vertex in $T$. The total weight is then equal to the order $n$ of the tree. Then, by theorem \ref{thm:beans} with $\epsilon=0$, the corollary follows.
\end{proof}

\printbibliography 
Corinne Mulvey, 
\href{mailto:corinne.mulvey@gmail.com}{corinne.mulvey@gmail.com}
\end{document}